\newcommand\NN{\mathbb N}
\newcommand\ZZ{\mathbb Z}
\newcommand\RR{\mathbb R}
\newcommand\CC{\mathbb C}
\newcommand\bcp{\mathbb C \mathbb P}
\newcommand\cpb{\overline{\mathbb C \mathbb P^2}}
\newcommand\wt{\widetilde}
\theoremstyle{plain}
\newtheorem{thm}[equation]{Theorem}
\newtheorem{cor}[equation]{Corollary}
\newtheorem{prop}[equation]{Proposition}
\newtheorem{rmk}[equation]{Remark}
\theoremstyle{definition}
\newtheorem{main}{Theorem}
\newtheorem*{ack}{Acknowledgments}
\begin{document}
\sloppy

\title{On Finite Symmetries of Simply Connected Four-Manifolds}
\author{Ioana {\c S}uvaina}
\thanks{Supported in part by NSF grant DMS-1309029.}

\address{Ioana {\c S}uvaina, 
         Department of Mathematics, Vanderbilt University, 1326 Stevenson Center, Nashville, TN 37214}
        
\email{ioana.suvaina@vanderbilt.edu}
\date{\today}

\begin{abstract} For most positive integer pairs $(a,b)$,
 the topological space $\#a\bcp^2\#b \cpb$ is shown to 
admit infinitely many inequivalent smooth structures
which dissolve upon performing  a single connected sum 
with $S^2\times S^2$.  This is then used  to 
construct  infinitely many non-equivalent smooth free actions of suitable finite groups on the connected sum 
 $\#a\bcp^2\#b \cpb$. We then investigate  the behavior of the sign of  the Yamabe invariant for the resulting finite covers, and observe  that 
these constructions provide many new counter-examples to the $4$-dimensional  Rosenberg Conjecture.
\end{abstract}
\maketitle

\section{Introduction} \label{introduction}

The geometry of  manifolds in dimension $4$ is remarkably intricate, and    in key  respects differs markedly
 from the  corresponding story in any other dimension.
For example,  many compact topological $4$-manifolds can be shown to support infinitely many inequivalent smooth structures.
Indeed, the remarkable results of Freedman \cite{freedman} and Donaldson \cite{don}  show that oriented 
simply-connected compact $4$-manifolds without boundary are classified, up to  homeomorphism, 
by just three invariants: the Euler characteristic $\chi$, the signature $\tau$, 
and the parity of the intersection form on $H^2({\mathbb Z})$; consequently, any simply connected 
non-spin closed $4$-manifold  is homeomorphic to a connected sum $\#a\bcp^2\#b\cpb$, 
where  $\bcp^2$ and $\cpb$  respectively denote the complex projective plane  with its standard and non-standard orientations. 
By contrast, however, gauge theory gives rise to diffeomorphism invariants, such as 
 Seiberg-Witten basic classes,  that can often be used to show that pairs of 
   $4$-manifolds 
are 
 non-diffeomorphic, even though the above-mentioned results imply that they are actually homeomorphic. 
 If a  $4$-manifold $M$ is orientedly diffeomorphic to $\#a\bcp^2\#b\cpb$, equipped with the  familiar  smooth structure arising from the 
 standard smooth structure on the complex projective plane via the connect-sum construction, 
 we
 will therefore  find it useful to say that $M$ carries the {\em conventional} smooth structure; on the other hand, if $M$ is homeomorphic but
 not diffeomorphic to the connected sum $\#a\bcp^2\#b\cpb$, we will then say that its differential structure is {\it exotic}.

We first discuss the existence of exotic structures and the region in which they can be exhibited,
and emphasize a special property of these structures, namely their solubility.
We say that a non-spin $4$-manifold is
{\it$(S^2\times S^2)$-soluble} 
or {\it $\bcp^2$-soluble,} if after taking the connected sum with
one copy of $S^2\times S^2$ or with one copy of $\bcp^2,$ respectively,
its  differential structure is the conventional structure. 
The $(S^2\times S^2)$-solubility is more adequate 
if one studies the general class of $4$-manifolds, 
while the $\bcp^2$-solubility is more suitable to the study of 
complex surfaces or symplectic $4$-manifolds. 
The $\bcp^2$-solubility property is equivalent to the almost complete decomposability
in the sense of Mandelbaum \cite{man}. 

We study the geography of  exotic structures on simply connected non-spin
$(S^2\times S^2)$-soluble
 manifolds. This generalizes the results on the geography of
symplectic manifolds of \cite{bra-kot}.
\begin{main}\label{geogr}
For any $\epsilon>0$  there is a constant $N_\epsilon>0$ 
such that given any integer pair $(a,b)$ in the
first quadrant satisfying either one of the two conditions
\begin{eqnarray}
b\geq(\frac12+\epsilon)a+N_\epsilon ~and~ a \not\equiv 0 \mod8, \label{SW-nontr}\\
b\leq\frac2{1+2\epsilon}(a-N_\epsilon) ~and~ b \not\equiv 0 \mod8, \label{op-orient}
\end{eqnarray}
the topological space $M=\#a\bcp^2\#b\cpb$ admits infinitely many pairwise non-diffeomorphic 
smooth structures, which are all
$(S^2\times S^2)$-soluble.

\end{main}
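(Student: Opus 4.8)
The plan is to treat the two regions separately, reducing the second to the first. Reversing the orientation of $M=\#a\bcp^2\#b\cpb$ produces $\#b\bcp^2\#a\cpb$, and the substitution $a\mapsto b$, $b\mapsto a$ carries condition (\ref{op-orient}) precisely into condition (\ref{SW-nontr}) (and the divisibility hypothesis $b\not\equiv0\bmod 8$ into $a\not\equiv0\bmod 8$), since from $b\le\frac{2}{1+2\epsilon}(a-N_\epsilon)$ with the variables swapped one recovers $b'\ge(\tfrac12+\epsilon)a'+N_\epsilon$. Because homeomorphism type, diffeomorphism type, and $(S^2\times S^2)$-solubility are all insensitive to a global change of orientation, it suffices to establish the conclusion for pairs $(a,b)$ satisfying (\ref{SW-nontr}).

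First I would produce a single ``seed'' manifold $X$ that is homeomorphic but not diffeomorphic to $M$, carries a nonzero Seiberg--Witten invariant, is $(S^2\times S^2)$-soluble, and contains a smoothly embedded torus $T$ of self-intersection zero whose complement is simply connected. The natural source is the geography of simply connected minimal surfaces of general type: writing $b_2^+=a$, $b_2^-=b$ one has $\chi=2+a+b$ and $\tau=a-b$, so such a surface would have $c_1^2=2\chi+3\tau=4+5a-b$ and holomorphic Euler characteristic $\chi_h=(\chi+\tau)/4=(a+1)/2$, and the inequality (\ref{SW-nontr}) is, up to the slack measured by $\epsilon$ and $N_\epsilon$, exactly the Bogomolov--Miyaoka--Yau constraint $c_1^2\le 3c_2$ bounding this region from above. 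The role of $\epsilon$ and $N_\epsilon$ is to stay a definite distance from the BMY line, where the realization of lattice points by simply connected soluble surfaces is available; this is the part that generalizes \cite{bra-kot}.

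The identity $\chi_h=(a+1)/2$ shows that an almost-complex, hence symplectic or complex, seed can exist only when $a$ is odd, which is precisely why the hypothesis is a congruence condition rather than a bare inequality; handling the even residues permitted by $a\not\equiv0\bmod 8$ is, I expect, the main obstacle. For odd $a$ one takes a minimal surface of general type realizing the appropriate $(c_1^2,\chi_h)$ and known to dissolve after one stabilization. For the remaining admissible residues one must instead construct an SW-nontrivial soluble seed by a non-symplectic modification (for instance a fiber sum along $T$ followed by a surgery that adjusts $b_2^+$) while preserving both the solubility and the nonvanishing of the Seiberg--Witten invariant. Verifying that such seeds can be arranged for every admissible residue class, uniformly in the region, is the crux of the argument.

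Granting a seed $X$, I would manufacture infinitely many structures by Fintushel--Stern knot surgery along $T$: for a family of knots $K_n\subset S^3$ with pairwise distinct Alexander polynomials $\Delta_{K_n}(t)$, the surgered manifolds $X_{K_n}$ are all homeomorphic to $M$ (the surgery alters neither $\chi$, $\tau$, nor the parity of the intersection form, and leaves the complement simply connected), while their Seiberg--Witten invariants are multiplied by $\Delta_{K_n}(t)$ and are therefore pairwise distinct. Since the invariants are diffeomorphism invariants in the range of interest, the $X_{K_n}$ are pairwise non-diffeomorphic. Finally, solubility is inherited by the whole family: knot surgery becomes trivial after a single stabilization, so $X_{K_n}\#(S^2\times S^2)\cong X\#(S^2\times S^2)$ for every $n$, and because the seed $X$ was chosen $(S^2\times S^2)$-soluble the right-hand side is the conventional connected sum. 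This produces infinitely many pairwise non-diffeomorphic, uniformly $(S^2\times S^2)$-soluble smooth structures on $M$, as required.
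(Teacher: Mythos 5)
Your reduction of region (\ref{op-orient}) to region (\ref{SW-nontr}) by reversing orientation is exactly the paper's first step. Your treatment of odd $a$ also has the right flavor, though it differs from the paper's mechanism: the paper does not use a single seed plus Fintushel--Stern knot surgery, but instead quotes Braungardt--Kotschick \cite[Theorem 4]{bra-kot}, which already supplies, for each lattice point with $a$ odd in the relevant region, infinitely many pairwise non-diffeomorphic simply connected minimal symplectic $\bcp^2$-soluble manifolds (produced by logarithmic transforms and distinguished by the bandwidth invariant of \cite{i-lb}); blow-ups then remove the Taubes constraint $b\leq 4+5a$, and Wall's trick converts $\bcp^2$-solubility into $(S^2\times S^2)$-solubility. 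If you insist on your variant, you would additionally need a soluble seed of general type containing a square-zero torus with simply connected complement at essentially every admissible lattice point near the Bogomolov--Miyaoka--Yau line --- this realization statement is not available in the literature for complex surfaces and is precisely what the symplectic geography of \cite{bra-kot} replaces --- as well as a citation for the fact that knot surgery is undone by a single stabilization.

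The genuine gap is the case of even $a$, i.e., the residues $a\equiv 2,4,6 \bmod 8$ allowed by the hypothesis $a\not\equiv 0\bmod 8$. You correctly identify this as ``the crux,'' but you offer no construction, and the one you sketch cannot work within ordinary Seiberg--Witten theory: simply connected symplectic manifolds have odd $b_2^+$, the operations you invoke (fiber sums along square-zero tori, logarithmic transforms, knot surgery) all preserve the parity of $b_2^+$, and any connected sum with a positive-$b_2^+$ piece annihilates the ordinary Seiberg--Witten invariants by the standard vanishing theorem. This is exactly the point where the paper changes tools: it invokes Bauer's connected-sum theorem for the stable cohomotopy (Bauer--Furuta) refinement \cite[Proposition 4.5]{bauer2}, which gives a nontrivial invariant for a connected sum of $k=2,3$ or $4$ manifolds with $b_1=0$, $b_2^+\equiv 3\bmod 4$ and odd Seiberg--Witten invariants (with the extra condition $\sum b_2^+\equiv 4 \bmod 8$ when $k=4$). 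Summing the odd-$a$ manifolds $X_{1,i}=M_{(a,b,i)}\#p\cpb$ with one, respectively three, copies of $K3$ shifts $b_2^+$ by $3$, respectively $9$, covering $a\equiv 2,6\bmod 8$ and $a\equiv 4\bmod 8$; the bandwidth argument still yields infinitely many diffeotypes, and solubility follows since $K3$ is $\bcp^2$-soluble. Without this device (or an equivalent one) your argument proves the theorem only for odd $a$, not as stated.
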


This result is used to exhibit an infinite 
 number of inequivalent smooth free actions on the {\it conventional}
non-spin smooth structure on $4$-manifolds:

\begin{main}
\label{main} Let $d\geq2$ be an integer and $\Gamma$ any group of order $d$
which acts freely on the sphere $S^3.$
For any $\epsilon>0$ there exists a constant $N_\epsilon'>0$
such that for any  point $(a,b)$ in the region $R_\epsilon$
satisfying the divisibility conditions $(D_1)$ and $(D_2)$
the manifold $M=\#a\bcp^2\#b\cpb$ has the following properties:
 \begin{itemize}
\item[${\ref{main}}_1$:]  $M$ admits infinitely many smooth orientation preserving free actions of the group $\Gamma,$ 
which we denote by $\Gamma_i, i\in \NN,$
\item[${\ref{main}}_2$:] the actions $\Gamma_i$ 
are conjugate by homemorphisms, but are not conjugate by the diffeomorphisms of $M.$
 \end{itemize}

The region $R_\epsilon$ is defined as:
\begin{eqnarray*}
R_{\epsilon,1}&=&\{(a,b)\in\NN\times\NN~|~b\geq(\frac12+\epsilon)a+N_\epsilon'\} \\
R_{\epsilon,2}&=&\{(a,b)\in\NN\times\NN~|~b\leq\frac2{1+2\epsilon}(a-N_\epsilon') \}\\
R_{\epsilon}&=&R_{\epsilon,1}\cup R_{\epsilon,2}
\end{eqnarray*}
while  the divisibility conditions are:
\begin{eqnarray*}
D_1:&a+1\equiv0\mod d {\text{ and }} b+1\equiv0\mod d,\\
D_2:& {\text{ if $(a,b)\in R_{\epsilon,1}$ then $\frac{a+1}d \not\equiv 1 \mod8$}},\\
&{\text{or if $(a,b)\in R_{\epsilon,2}$ then $\frac{b+1}d \not\equiv 1 \mod8$}}.
\end{eqnarray*}
 \end{main}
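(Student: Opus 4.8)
The plan is to pass to quotients and reduce everything to the geography Theorem~\ref{geogr}. An orientation-preserving free $\Gamma$-action on $M$ is the same datum as a closed oriented $4$-manifold $Y$ with $\pi_1(Y)\cong\Gamma$ together with an orientation-preserving identification of its universal cover $\widetilde Y$ with $M$, and two such actions are conjugate by a homeomorphism (resp. diffeomorphism) of $M$ exactly when the quotients are orientation-preservingly homeomorphic (resp. diffeomorphic). Hence it suffices to produce infinitely many pairwise non-diffeomorphic, mutually homeomorphic closed $4$-manifolds $Y_i$ with $\pi_1(Y_i)\cong\Gamma$ whose universal cover is each time the conventional $\#a\bcp^2\#b\cpb$. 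Writing $\alpha=(a+1)/d$ and $\beta=(b+1)/d$, which are integers by $(D_1)$, a covering Euler-characteristic and signature count gives $\chi(Y_i)=\alpha+\beta$ and $\tau(Y_i)=\alpha-\beta$, hence $b_2^+(Y_i)=\alpha-1$ and $b_2^-(Y_i)=\beta-1$.

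For the construction I first fix a model. Since $\Gamma$ acts freely on $S^3$, there is a standard orientation-preserving free $\Gamma$-action on $\#(d-1)(S^2\times S^2)$ whose quotient $Q$ is a rational homology $4$-sphere with $\pi_1(Q)\cong\Gamma$ carrying a $\Gamma$-invariant metric of positive scalar curvature; I take this as the building block. I then apply Theorem~\ref{geogr} to the pair $(\alpha-1,\beta-1)$. Under the substitution $a=d\alpha-1$, $b=d\beta-1$ the hypotheses $(D_1)$, $(D_2)$ and the region $R_\epsilon$ translate precisely into \eqref{SW-nontr} or \eqref{op-orient} for $(\alpha-1,\beta-1)$: the congruence in $(D_2)$ becomes $\alpha-1\not\equiv0\bmod 8$ (resp. $\beta-1\not\equiv0$), while the shift and the factor $d$ in the linear inequalities are absorbed into the constant $N_\epsilon'$. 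Theorem~\ref{geogr} thus yields infinitely many pairwise non-diffeomorphic, $(S^2\times S^2)$-soluble structures $X_i$ on $\#(\alpha-1)\bcp^2\#(\beta-1)\cpb$, and I set $Y_i:=Q\# X_i$, so that $\pi_1(Y_i)\cong\Gamma$.

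The crucial point, and the place where solubility is used, is that every $Y_i$ has conventional universal cover. Since the connect-sum region of $Y_i$ lifts to $d$ balls, $\widetilde{Y_i}=\widetilde Q\#\bigl(\#_{g\in\Gamma}gX_i\bigr)=\#(d-1)(S^2\times S^2)\#\bigl(\#_{d}X_i\bigr)$. Solubility gives $X_i\#(S^2\times S^2)\cong\bigl(\#(\alpha-1)\bcp^2\#(\beta-1)\cpb\bigr)\#(S^2\times S^2)$, so feeding a single $S^2\times S^2$ summand successively through the $d$ copies of $X_i$ — it regenerates after each dissolution — replaces every $X_i$ by the standard piece, and the relations $(S^2\times S^2)\#\bcp^2\cong 2\bcp^2\#\cpb$ then convert the surviving $S^2\times S^2$ factors. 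The count $(d-1)+d(\alpha-1)=a$ and $(d-1)+d(\beta-1)=b$ shows $\widetilde{Y_i}\cong\#a\bcp^2\#b\cpb$ with its conventional structure. Therefore each $Y_i$ is the quotient of the conventional $M$ by a free $\Gamma$-action $\Gamma_i$, which proves ${\ref{main}}_1$.

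Finally, all the $X_i$ are homeomorphic to the same $\#(\alpha-1)\bcp^2\#(\beta-1)\cpb$, so the $Y_i=Q\# X_i$ are mutually orientation-preservingly homeomorphic, and such a homeomorphism lifts to an equivariant homeomorphism of $M$ conjugating the actions; conversely a diffeomorphism conjugating $\Gamma_i$ to $\Gamma_j$ would descend to a diffeomorphism $Y_i\cong Y_j$. Thus ${\ref{main}}_2$ reduces to showing that the $Y_i$ are pairwise non-diffeomorphic. Here lies the main obstacle: the universal covers are all the \emph{standard} $M$ and so cannot separate the actions, and the distinction must be extracted downstairs, from the quotients carrying the fundamental group $\Gamma$. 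The plan is to use the Seiberg-Witten invariants of the $Y_i$, which are defined since $b_2^+(Y_i)=\alpha-1>1$, and to compute them by a gluing argument: because $Q$ is a rational homology sphere of positive scalar curvature, its presence does not annihilate the invariants coming from $X_i$, so the basic classes of $Y_i$ reproduce those of $X_i$ (indexed by the characters of $\Gamma$). Since the $X_i$ are separated by their Seiberg-Witten invariants in Theorem~\ref{geogr}, the $Y_i$ are pairwise non-diffeomorphic, completing ${\ref{main}}_2$. I expect this Seiberg-Witten computation over a manifold with finite nontrivial $\pi_1$, connect-summed with the $\QQ$-homology sphere $Q$, to be the delicate step, precisely because it must read off an invariant that the common conventional universal cover conceals.
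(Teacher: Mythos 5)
Your construction is the same as the paper's (quotients $Y_i=$ Ue's rational homology sphere $S_\Gamma$ connect-summed with exotic soluble manifolds, universal cover dissolved via solubility and Wall's trick), but the step you yourself flag as delicate is where the argument genuinely breaks, and not merely because the gluing result is left unproved. You propose to distinguish the quotients $Y_i=Q\#X_i$ by their ordinary Seiberg--Witten invariants, asserting that ``the $X_i$ are separated by their Seiberg--Witten invariants in Theorem~\ref{geogr}.'' That is not what Theorem~\ref{geogr} provides. Its statement gives only existence of infinitely many $(S^2\times S^2)$-soluble diffeotypes, with no gauge-theoretic data attached, so it cannot be used as a black box here; and inside its proof, the families needed to cover the congruence classes $b_2^+\equiv 2 \bmod 4$ and $b_2^+\equiv 4\bmod 8$ are of the form $X_{1,i}\#K3$ and $X_{1,i}\#3K3$. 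Since both summands have $b_2^+\geq 1$, \emph{all} ordinary Seiberg--Witten invariants of these manifolds vanish by the standard connected-sum vanishing theorem, and a fortiori so do those of $Q\#X_{1,i}\#K3$, etc. Your divisibility hypothesis $(D_2)$ only excludes $\frac{a+1}{d}\equiv 1\bmod 8$, so these families are unavoidable: restricting to the blow-ups of minimal symplectic manifolds (the only case where ordinary SW invariants are nonzero) would prove the theorem only when $\frac{a+1}{d}$ is even, not in the stated generality.

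The paper's proof repairs exactly this: it re-enters the proof of Theorem~\ref{geogr}, takes the explicit families $X_{k,i}$ there, and replaces ordinary Seiberg--Witten theory by the Bauer--Furuta stable cohomotopy invariant, which survives connected sums of up to four pieces with $b_2^+>0$ (Bauer's theorem) and, by Corollary 8 of Ishida--LeBrun \cite{i-lb-o}, remains non-trivial after connected sum with $S_\Gamma$. Even then, non-vanishing alone does not separate infinitely many diffeotypes, since all $Y_i$ could a priori share the same invariant; the paper concludes with the bandwidth argument of \cite{i-lb}: the set of monopole classes is a diffeomorphism invariant, it is finite for each manifold, and $\sup_i\mathcal{BW}=+\infty$ forces infinitely many diffeotypes. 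Your proposal omits this last step as well. So the skeleton of your argument is correct and identical to the paper's, but the distinguishing invariant must be the stable cohomotopy refinement together with the bandwidth, not the ordinary Seiberg--Witten invariants, which are identically zero on most of the manifolds you need.
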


For any integer $d$ there exists at least one group of order $d$ which acts freely on the
$3$-sphere, for example the finite cyclic group $\ZZ_d\cong\{\rho\in\CC|~ \rho^d=1\}$ acting on 
$S^3\subset\CC^2$ by multiplication.
The divisibility condition $D_1$ is necessary in order to assure that the quotient manifolds 
$M/\Gamma_i$ have the Betti numbers $b_2^+=\frac{1+a}d-1$ and $b_2^-=\frac{1+b}d-1$ integer valued.
The constant $\epsilon$ can be chosen to be arbitrary small. 
Unfortunately, due to the nature of the constructions involved we are unable to compute the value 
of $N_\epsilon'.$ The constant depends on the constant $N_\epsilon$ from Theorem \ref{geogr} and
increases linearly in $d,$ for the exact formula see equation (\ref{constant}) in  Section $\S$ \ref{exotic-str}.
 Nevertheless, the region $R_\epsilon$ covers all the integer lattice points 
in the first quadrant with the exception of finitely many.

This theorem generalizes earlier work of Ue \cite[Main Theorem]{ue}, 
LeBrun \cite[Theorem 2]{lnyj} and Hanke-Kotschick-Wehrheim \cite[Theorem 12]{hkw},
putting an emphasis on the geography of the $4$-manifolds. 
 Torres \cite[Theorems 1.6 and 1.7]{tor} also discusses similar results but with more restrictive 
divisibility conditions and for a much smaller region.  
Using different constructions, Akhmedov-Ishida-Park \cite{aip} exhibited a similar
behaviour for manifolds with zero signature. 
One of the aims of these papers is to provide examples for which the 
Rosenberg Conjecture \cite[Conjecture 1.2]{rosen} fails, 
meaning that there are manifolds with finite fundamental group of odd order  
which do not admit metrics of positive scalar curvature, while their universal cover does.
Such a phenomenon can be detected by a differential invariant of the manifold arising from 
Riemannian  geometry, the Yamabe invariant, see equation (\ref{yam}) in Section $\S$ \ref{prop-yam}.
As $\bcp^2$ admits a metric of positive curvature, for example the Fubini-Study metric, 
then a result of Gromov-Lawson \cite{gr-la} tells us that this property is preserved 
under the  connected sum operation,
and hence so does  $M=\#a\bcp^2\#b\cpb.$ 
The  Yamabe invariant of $M$ is, then, positive. 
The existence of a symmetry group on  $M$  has an immediate impact on the
Riemannian properties of the manifold.
If we consider the manifold $M$ endowed with one of the actions $\Gamma_i$,
 we  show that:
\begin{prop}\label{yamabe}
For any integer $d$ and any point $(a,b)$ in the region $R_\epsilon$ satisfying 
the divisibility conditions $D_1$ and $D_2$
let $g$ be a $\Gamma_i$-invariant metric on 
the manifold $M=\#a\bcp^2\#b\cpb$, for any of the $\Gamma_i$ actions defined in Theorem \ref{main}.
Then the Yamabe invariant of the conformal class of $g$ is negative.
\end{prop}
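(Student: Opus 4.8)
The plan is to descend everything to the quotient $N_i=M/\Gamma_i$ and to prove that the Yamabe invariant of $N_i$ is strictly negative; the sign of $Y([g])$ upstairs will then be forced to coincide with it. First I would record that, since $g$ is $\Gamma_i$-invariant and $\Gamma_i$ acts freely by isometries, $g$ descends to a smooth metric $\bar g$ on the closed $4$-manifold $N_i=M/\Gamma_i$, the covering projection $\pi\colon M\to N_i$ satisfies $g=\pi^*\bar g$, and hence $[g]=[\pi^*\bar g]$. Here $M=\#a\bcp^2\#b\cpb$ with its conventional structure, so that $N_i$ has finite fundamental group $\Gamma$ of order $d$.

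Next I would reduce to a spectral statement about the conformal Laplacian. On a closed manifold the sign of the Yamabe constant $Y([h])$ equals the sign of the lowest eigenvalue $\lambda_1(L_h)$ of the conformal Laplacian $L_h=-\tfrac{4(n-1)}{n-2}\Delta_h+s_h$, with $n=4$. The point is that this sign is preserved under the finite cover: the ground state of $L_g$ is the unique, up to scale, positive eigenfunction $\phi$, and for each $\gamma\in\Gamma_i$ the pullback $\gamma^*\phi$ is again a positive ground state, whence $\gamma^*\phi=\phi$; thus $\phi$ is $\Gamma_i$-invariant and descends to a positive eigenfunction of $L_{\bar g}$ with the same eigenvalue. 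A positive eigenfunction is always the ground state, so $\lambda_1(L_g)=\lambda_1(L_{\bar g})$ and therefore $\operatorname{sign}Y([g])=\operatorname{sign}Y([\bar g])$. Since $Y([\bar g])\le Y(N_i)$ by definition of the Yamabe invariant of the manifold, it suffices to show $Y(N_i)<0$. (This is entirely consistent with the fact that the conventional $M$ satisfies $Y(M)>0$: the positive–scalar–curvature metrics on $M$ are precisely the ones that fail to be $\Gamma_i$-invariant.)

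It then remains to establish $Y(N_i)<0$ via Seiberg–Witten theory. The construction underlying Theorem \ref{main} arranges $N_i$ to have $b^+(N_i)>1$ together with a monopole class $c$ of positive square $c^2>0$, inherited from the Seiberg–Witten basic classes of the symplectic/complex building blocks furnished by Theorem \ref{geogr}. By LeBrun's curvature estimate for the Seiberg–Witten equations \cite{lnyj}, a monopole class of positive self-intersection not only obstructs positive scalar curvature but bounds the invariant from above, $Y(N_i)\le -4\pi\sqrt{2\,c^2}<0$. Combined with the previous paragraph this gives $Y([g])<0$, as claimed. I would note that the invariant depends only on scalar curvature, hence is insensitive to orientation, so the region $R_{\epsilon,2}$ is handled by reversing orientation and appealing to the Seiberg–Witten nontriviality of $\overline{N_i}$.

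The hard part is the last step: verifying that $N_i$ genuinely carries a monopole class of positive square. The descent of the metric and the sign-preservation under a finite cover are routine consequences of the maximum principle, but the Seiberg–Witten nonvanishing of the quotient must be read off from the explicit cut-and-paste construction of Theorem \ref{main}. Concretely, one has to check, using the gluing formulae for Seiberg–Witten invariants, that the basic classes of the pieces are not annihilated by the surgeries installing the free $\Gamma$-action and that the surviving class has $c^2>0$ throughout $R_\epsilon$. This is the single place where the proof leans on the geometry of the construction rather than on general principles.
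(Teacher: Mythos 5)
Your overall frame coincides with the paper's: pass to the quotient $N_i=M/\Gamma_i$, prove $Y(N_i)<0$ by Seiberg--Witten-theoretic curvature estimates, and transfer the sign back to $[g]$. Your transfer step via the ground state of the conformal Laplacian is correct, and is a legitimate alternative to what the paper does (the paper instead uses the solution of the Yamabe problem: since $Y(N_i)<0$, the Yamabe minimizer in the pushed-forward conformal class has negative constant scalar curvature, and its pullback is a $\Gamma_i$-invariant negative-CSC metric in $[g]$, whence $Y(M,[g])<0$ by \cite{schoen}). The problem is the step you defer as ``the hard part'': it is not merely unverified, but the way you propose to verify it would fail.

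First, there are no surgeries to analyze and no SW gluing formulae to invoke: by the construction in Theorem \ref{main}, $M/\Gamma_i$ is \emph{by definition} diffeomorphic to $X_{k,i}\#S_\Gamma$, a connected sum of up to four $4$-manifolds with $b_2^+>0$, further blown up and summed with Ue's rational homology sphere. Ordinary Seiberg--Witten invariants vanish identically on any connected sum in which both pieces have $b_2^+>0$, so no gluing formula for SW invariants can produce the nonvanishing you need; the monopole classes on $N_i$ exist only because of the Bauer--Furuta stable cohomotopy invariant (Bauer's connected sum theorem \cite{bauer2}) and its extension to sums with rational homology spheres by Ishida--LeBrun, Corollary 8 of \cite{i-lb-o}, which is exactly what Theorem \ref{main} already invokes. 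Second, and more seriously, the statement you reduce to --- a monopole class $c$ on $N_i$ with $c^2>0$ --- is generically \emph{false} for these manifolds: the monopole classes have the form $\pm s_1\pm\dots\pm s_k+\sum_j\pm e_j$, whose square is $c_1^2(M_{(a,b,i)})+0-p$ (each $K3$ summand contributes $0$, each of the $p$ exceptional classes contributes $-1$), and covering the region $R_\epsilon$ forces $p$ to be arbitrarily large, so $c^2$ is typically very negative. Hence LeBrun's basic estimate $Y\leq-4\pi\sqrt{2c^2}$ is simply unavailable. What rescues the argument, and what the paper actually uses, are the refined connected-sum estimates of Ishida--LeBrun \cite[Propositions 12 and 14]{i-lb-o}, which for monopole classes of this special form give $Y(X_{k,i}\#S_\Gamma)\leq-4\pi\sqrt{2c_1^2(M_{(a,b,i)})+2\sum_{j=2}^k c_1^2(X_j)}$, i.e.\ the exceptional and $K3$ contributions drop out of the bound; combined with the normalization that the minimal symplectic manifolds $M_{(a,b,i)}$ of Theorem \ref{geogr} may be taken of general type, so $c_1^2(M_{(a,b,i)})>0$, this yields $Y(N_i)\leq-4\pi\sqrt2<0$. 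Without citing (or reproving) these refined estimates and the general-type normalization, your outline cannot be completed.
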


\begin{rmk}
In some cases it can be shown that these actions also give obstructions
 to the existence of invariant Einstein metrics, see
\c Suvaina \cite{eu-einst}.
\end{rmk}

For the examples in Proposition \ref{yamabe} the Yamabe invariant of the conformal class of 
a $\Gamma_i$-invariant metric, or equivalently of any conformal class on $M/\Gamma_i,$ is negative.
Thus our constructions provide a large class of counterexamples to the Rosenberg Conjecture.
Moreover, the Yamabe invariant of the conformal class is bounded away from zero.
This is not necessary true  for an arbitrary action of $\Gamma$ and $\Gamma$-invariant metrics.

\begin{prop}\label{yam0}
For any integer $d\geq2$ and any integer pair $(a,b)$ in the region:
\begin{eqnarray*}
R_0 =&\{(a,b)\in\NN\times\NN~|~b\geq5a+12d+4, {\text{ and }}\frac{1+a}d \not\equiv 1 \mod8\} \\
&\bigcup~\{(a,b)\in\NN\times\NN~|~b\leq\frac15(a-12d-4), {\text{ and }}\frac{1+b}d \not\equiv 1 \mod8\}
\end{eqnarray*}
on the manifold $M=\#a\bcp^2\#b\cpb$ there exist infinitely many conjugate homeomorphic 
conjugate non diffeomoprhic actions of the group $\Gamma$, which we denote by $\Gamma_i', i\in\NN,$ such that the
Yamabe invariant of the conformal class of a $\Gamma_i'$-invariant metric $g$ is nonpositive  
and for different choices of $\Gamma_i'$-invariant  metrics $g$ the invariant   can
be made arbitrary close to zero. 
\end{prop}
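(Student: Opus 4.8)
The strategy mirrors the proofs of Theorem \ref{main} and Proposition \ref{yamabe}; the one essential change is that the symplectic model feeding the construction is now taken to have Kodaira dimension one (a properly elliptic surface) rather than to be of general type. This makes the distinguished monopole class null, and it is this vanishing that pins the Yamabe invariant of the quotient to zero rather than making it strictly negative.

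First I would reproduce the equivariant construction of Theorem \ref{main}, but starting from a minimal elliptic surface equipped with a free $\Gamma$-action compatible with its elliptic pencil and with invariants arranged so that the free cover carries the prescribed $(a,b)$. Equivariant logarithmic transforms (or fiber sums and knot surgeries) performed along a $\Gamma$-orbit of regular fibers then yield infinitely many symplectic manifolds $N_i$ which are mutually homeomorphic---via Freedman's theorem, since they share $\chi$, $\tau$, parity and $\pi_1\cong\Gamma$---but are pairwise non-diffeomorphic, being separated by their Seiberg--Witten invariants. Taking $\Gamma_i'$ to be the deck action on $M=\widetilde{N_i}$ gives homeomorphically conjugate, smoothly non-conjugate free actions exactly as in Theorem \ref{main}, and the $(S^2\times S^2)$-solubility guarantees, as there, that each cover $M$ carries the conventional smooth structure. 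A short computation identifies the two defining pieces of $R_0$ with the inequalities $2\chi(N_i)+3\tau(N_i)=(4+5a-b)/d\le -12$ and (after reversing orientation) $(4+5b-a)/d\le -12$; this is precisely the range in which the symplectic minimal model of $N_i$ is elliptic, the deficit below zero being absorbed by blow-ups.

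Next I would establish nonpositivity. The region together with the mod-$8$ clause of the divisibility hypothesis excludes the degenerate value $\tfrac{1+a}{d}=1$, so $N_i$ is a symplectic manifold whose canonical class is a monopole class with nonzero Seiberg--Witten invariant; by LeBrun's Seiberg--Witten curvature estimate $N_i$ admits no metric of positive scalar curvature, so $Y(N_i)\le 0$ and in particular $Y_{N_i}([\bar g])\le 0$ for the induced class. Since the first eigenvalue of the conformal Laplacian cannot increase under pullback to the cover, $Y([g])\le 0$. Finally, a nonpositive Yamabe constant has a minimizer that is unique up to scale, hence $\Gamma_i'$-invariant, so it descends and one gets the identity $Y([g])=\sqrt{d}\,Y_{N_i}([\bar g])$.

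It remains to push the invariant to zero. Because the minimal model is elliptic, its canonical monopole class $c$ satisfies $c^2=0$, so LeBrun's bound $\int_{N_i}s_g^2\,dV_g\ge 32\pi^2(c^+)^2$ degenerates to $Y(N_i)\le 0$ with no slack, matching the Kodaira-dimension-one value $Y=0$ of \cite{lnyj}. To realize it I would collapse the generic torus fiber of the elliptic fibration through a family of $\Gamma$-invariant metrics: as the flat fiber directions shrink the scalar curvature tends to $0^-$, so the associated Yamabe constants approach $0$ from below, and their pullbacks give $\Gamma_i'$-invariant metrics $g_j$ with $Y([g_j])=\sqrt{d}\,Y_{N_i}([\bar g_j])\to 0^-$. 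This fiberwise collapse is the crux of the argument: on the exotic and generally non-complex smooth structure $N_i$ the near scalar-flat metrics must be built around the surgery region while remaining $\Gamma$-invariant, and it is exactly the upgrade from $Y(N_i)\le 0$ to $Y(N_i)=0$ that separates this statement from Proposition \ref{yamabe}. A final bookkeeping point is to verify, uniformly over $R_0$, that the minimal model is forced to be elliptic while the cover stays soluble, a compatibility that constrains which surgeries are admissible.
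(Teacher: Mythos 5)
Your approach has a fatal structural flaw: you build the $\Gamma$-action \emph{equivariantly on the elliptic surface itself}, so the universal cover of your quotient $N_i$ is the (log-transformed) elliptic surface, i.e.\ a simply connected symplectic manifold with $b_2^+>1$. By Taubes such a manifold has nonzero Seiberg--Witten invariants, whereas the conventional $\#a\bcp^2\#b\cpb$ admits positive scalar curvature and hence has vanishing Seiberg--Witten invariants. So your actions live on \emph{exotic} smooth structures, not on the conventional $M$ that the proposition (like Theorem \ref{main}) requires. Your appeal to $(S^2\times S^2)$-solubility cannot rescue this: in your construction the cover is just $\widetilde{N_i}$, with no $S^2\times S^2$ summands present to trigger dissolution. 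The paper avoids exactly this trap by never making the quotient symplectic: the quotients are connected sums $E(n)_{p,q}\#k\cpb\#S_\Gamma$ (and variants with extra $E(2)$ summands), where $S_\Gamma$ is Ue's rational homology sphere with $\pi_1(S_\Gamma)=\Gamma$ and universal cover $\#(d-1)(S^2\times S^2)$. The free action is supplied entirely by the $S_\Gamma$ summand, the cover becomes $\#d\bigl(E(n)_{p,q}\#k\cpb\bigr)\#(d-1)(S^2\times S^2)$, and this dissolves to the conventional manifold by Mandelbaum's $\bcp^2$-solubility of $E(n)_{p,q}$ together with Wall's trick. Correspondingly, the Yamabe computation cannot be your fiber-collapse on an honest elliptic quotient; since the quotient is a connected sum, the paper instead quotes Ishida--LeBrun (Theorem A of \cite{i-lb-o}) to get $Y=0$ for the quotients, then chooses negative constant scalar curvature metrics with scalar curvature tending to zero and lifts them.

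Two secondary problems: first, you invoke Freedman's theorem to conclude that the $N_i$ with $\pi_1\cong\Gamma$ are homeomorphic from $\chi$, $\tau$ and parity alone, but Freedman's classification is a statement about \emph{simply connected} $4$-manifolds; the paper only ever applies it to the simply connected summands $M_*$ and then extends across the fixed summand $S_\Gamma$. Second, even granting an equivariant framework, the existence of free actions of an \emph{arbitrary} finite group $\Gamma\subset\mathrm{Diff}(S^3)$ on elliptic surfaces with prescribed invariants is not established and is likely false in the generality needed; Ue's construction of $S_\Gamma$ is precisely what lets the paper handle every such $\Gamma$ uniformly.
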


As an immediate consequence of Propositions \ref{yamabe} and \ref{yam0} we have:
\begin{cor}\label{dist-yam}
On any manifold $M=\#a\bcp^2\#b\cpb$ for $(a,b)$ in the region $R_0\cap R_\epsilon$
 there are infinitely many, free actions of the group $\Gamma,$ denoted by $\Gamma_i, i\in\NN,$
such that the Yamabe invariant of the  manifold $M/\Gamma_i$ is negative, and
there are also infinitely many free actions of the group $\Gamma,$ denoted by $\Gamma_i', i\in\NN,$
such that the Yamabe invariant of the manifold $M/\Gamma_i'$ is zero.
\end{cor}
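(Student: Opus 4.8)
The plan is to read off the sign of the Yamabe invariant of each quotient directly from the conformal statements of Propositions \ref{yamabe} and \ref{yam0}, using that $Y(N)=\sup_{[\bar g]}Y([\bar g])$ for a closed manifold $N$ (equation (\ref{yam})) together with the behaviour of the conformal Yamabe invariant under the covering $\pi\colon M\to M/\Gamma_i$. The basic relation I would record is this: since scalar curvature is pointwise and $\pi$ is a $d$-fold Riemannian covering for any pulled-back metric, $\mathrm{Vol}(M,\pi^*\bar g)=d\,\mathrm{Vol}(M/\Gamma_i,\bar g)$ and $\int_M s_{\pi^*\bar g}\,dV=d\int_{M/\Gamma_i}s_{\bar g}\,dV$, so the Yamabe functional rescales by $d^{1/2}$ in real dimension four. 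Conformal classes on the quotient lift bijectively to $\Gamma_i$-invariant conformal classes on $M$; and when the conformal Yamabe invariant is nonpositive its constant-scalar-curvature minimizer is unique by the maximum principle, hence $\Gamma_i$-invariant for a lifted class, i.e.\ a pullback. Together these give $Y([\pi^*\bar g])=\sqrt{d}\,Y([\bar g])$ whenever $Y([\bar g])\le0$; in particular the sign is preserved under the covering.

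For the negative family, Proposition \ref{yamabe} says every $\Gamma_i$-invariant conformal class on $M$ has negative Yamabe invariant, so by the relation above every conformal class on $M/\Gamma_i$ does too, giving $Y(M/\Gamma_i)=\sup_{[\bar g]}Y([\bar g])\le0$. To promote this to a strict inequality I would use the Seiberg--Witten input underlying Proposition \ref{yamabe}: the quotient inherits a monopole class of positive square, so LeBrun's estimate forces $\int_{M/\Gamma_i}s_{\bar g}^2\,dV\ge c>0$ uniformly in $\bar g$; since $Y(M/\Gamma_i)\le0$, the identity $Y(M/\Gamma_i)=-\bigl(\inf_{\bar g}\int s_{\bar g}^2\,dV\bigr)^{1/2}$ then yields $Y(M/\Gamma_i)<0$.

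For the zero family the argument is formal. Proposition \ref{yam0} provides, for each $\Gamma_i'$, that every $\Gamma_i'$-invariant conformal class has nonpositive Yamabe invariant and that these values can be made arbitrarily close to zero. Transporting both statements across the covering, every conformal class on $M/\Gamma_i'$ has nonpositive Yamabe invariant, so $Y(M/\Gamma_i')\le0$, while the supremum is not bounded away from zero, so $Y(M/\Gamma_i')=0$. On the overlap $R_0\cap R_\epsilon$, where the divisibility hypotheses of both propositions are in force, both families live on the same $M=\#a\bcp^2\#b\cpb$, which is precisely the assertion.

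I expect the only real point to be the strict negativity in the second paragraph: the conformal statement of Proposition \ref{yamabe} alone gives only $Y(M/\Gamma_i)\le0$, and excluding a supremum equal to zero requires the uniform, Seiberg--Witten lower bound on $\int s^2$ to survive passage to the quotient. The remaining ingredients---the $\sqrt{d}$ rescaling and the uniqueness of nonpositive constant-scalar-curvature representatives that makes a lifted minimizer a pullback---are standard.
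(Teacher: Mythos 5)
Your conclusions are right and most of your steps are sound, but your route is genuinely different from the paper's, and one justification needs repair. The paper's proof of Corollary \ref{dist-yam} is essentially a citation: the proofs of Propositions \ref{yamabe} and \ref{yam0} already establish the Yamabe invariants of the quotient manifolds themselves, working directly downstairs --- $Y(M/\Gamma_i)=Y(X_{k,i}\#S_\Gamma)\leq-4\pi\sqrt{2}$ via \cite[Propositions 12 and 14]{i-lb-o}, and $Y(M/\Gamma_i')=0$ via \cite[Theorem A]{i-lb-o} --- and the conformal statements on $M$ appearing in those propositions are deduced afterwards by lifting constant-scalar-curvature metrics. The corollary's proof in the paper then only adds that the quotients are all homeomorphic (Freedman--Donaldson) and pairwise non-diffeomorphic. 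You instead take the propositions' statements as the sole input and push the conformal information back down through the covering, using the $\sqrt{d}$-rescaling of the Yamabe functional and the uniqueness up to homothety of nonpositive constant-scalar-curvature representatives (which makes the minimizer of a lifted class invariant, hence a pullback). That transport machinery is correct and standard, and your treatment of the zero family is complete; what it buys is a proof relying only on the statements of Propositions \ref{yamabe} and \ref{yam0}, at the cost of re-deriving on the quotient the strict negativity that the paper already possesses there.

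The step that fails as written is your strict-negativity input: you assert that the quotient ``inherits a monopole class of positive square.'' The quotient $X_{k,i}\#S_\Gamma$ does carry monopole classes (\cite[Corollary 8]{i-lb-o}, as used in the proof of Theorem \ref{main}), but for $X_{1,i}=M_{(a,b,i)}\#p\cpb$ they have the form $\pm s+\sum_{j=1}^p\pm e_j$, of self-intersection $c_1^2(M_{(a,b,i)})-p$, which is negative as soon as the number of blow-ups exceeds $c_1^2(M_{(a,b,i)})$ --- and large $p$ is exactly how most points $(a,b)$ of the region are reached. So the single-class estimate $\int s^2\,d\mu\geq32\pi^2(a^+)^2$ does not by itself yield a uniform positive bound. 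The bound you need, $\inf_g\int s_g^2\,d\mu_g\geq32\pi^2c_1^2(M_{(a,b,i)})>0$, is true, but it requires the refined estimates of \cite[Propositions 12 and 14]{i-lb-o}, which exploit all the sign choices $\pm e_j$ simultaneously; equivalently, you may simply quote the inequality $Y(X_{k,i}\#S_\Gamma)\leq-4\pi\sqrt{2}$ established in the proof of Proposition \ref{yamabe}. With that substitution your argument is complete; the infinitude and pairwise non-conjugacy of the actions, which the corollary implicitly needs, are inherited from Theorem \ref{main} and Proposition \ref{yam0} exactly as in the paper.
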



\section{Exotic smooth structures}\label{exotic-str}
In this section we discuss the existence of exotic smooth structures and prove Theorem \ref{geogr}.
\begin{proof}[Proof of Theorem \ref{geogr}]
It is enough to prove the statement for the points $(a,b)$ satisfying the  inequality (\ref{SW-nontr}).
The second inequality is immediately implied by the first if one considers a change in orientation.
The solubility property is  preserved under this operation as $S^2\times S^2$ 
admits a diffeomorphism which reverses its orientation,
for example one can consider the antipodal map on one of the factors and identity on the other.

The construction relies on a result of Braungardt-Kotschick, which proves the following theorem 
about the geography of
$\bcp^2$-soluble symplectic manifolds:
\begin{thm}[ {\cite[Theorem 4]{bra-kot}}]\label{th-BK}
For every $\epsilon>0$ there is a constant $N_\epsilon>0$ such that every lattice point $(a,b)$ 
satisfying the conditions
\begin{eqnarray}
a\equiv 1 \mod2\label{odd}\\
b\leq4+5a\label{minimal}\\
b\geq(\frac12+\epsilon)a+N_\epsilon \label{B_K-ineq}
\end{eqnarray}
is realized by the Betti two invariants $(a,b)=(b^+_2,b^-_2)$ of infinitely many
pairwise non-diffeomorphic  simply connected minimal symplectic manifolds $M_{(a,b,i)}$, all of which are 
$\bcp^2$-soluble.
\end{thm}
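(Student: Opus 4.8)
The plan is to produce, for each admissible pair $(a,b)$, a single minimal simply connected symplectic model that is $\bcp^2$-soluble, and then to manufacture infinitely many smooth structures on it by a surgery that leaves both the homeomorphism type and the solubility intact. First I would rewrite the Betti data in terms of the characteristic numbers of a symplectic $4$-manifold: for a simply connected symplectic manifold with $(b_2^+,b_2^-)=(a,b)$ one has $c_1^2=4+5a-b$ and $c_2=2+a+b$, so that the parity hypothesis (\ref{odd}) is precisely the integrality of $\chi_h=(a+1)/2$, while (\ref{minimal}) is equivalent to $c_1^2\ge0$. The target is therefore the part of the symplectic geography plane with $c_1^2\ge0$ that is cut out by the linear bound (\ref{B_K-ineq}), and the first task is to fill this region with minimal models.

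For the realization step I would build the models from pieces whose almost complete decomposability — equivalently $\bcp^2$-solubility, in the sense of Mandelbaum — is already established, such as blow-ups of $\bcp^2$, the elliptic surfaces $E(n)$, and minimal surfaces of general type lying near the Noether line, gluing them by symplectic fiber sums along embedded tori or curves of square zero. Since a fiber sum of $\bcp^2$-soluble pieces along such a surface can again be shown $\bcp^2$-soluble, this provides, for each admissible $(a,b)$, a minimal simply connected symplectic manifold $M_{(a,b)}$ with the prescribed Betti numbers, $\bcp^2$-soluble, and containing a symplectically embedded torus $T$ of self-intersection $0$ whose meridian normally generates $\pi_1$.

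To go from one model to infinitely many I would apply Fintushel--Stern knot surgery along $T$ with an infinite family of fibered knots $K$ having pairwise distinct Alexander polynomials $\Delta_K$. For fibered $K$ the result $M_K$ is again symplectic and minimal; it is homeomorphic to $M_{(a,b)}$, since the surgery preserves the intersection form and the vanishing of $\pi_1$ (the meridian kills the fundamental group); and, because $b_2^+=a$ is large, the model has nonzero Seiberg--Witten invariant by Taubes, which knot surgery multiplies by $\Delta_K$. Distinct $\Delta_K$ then give distinct Seiberg--Witten invariants, hence the desired infinite family $M_{(a,b,i)}$ of pairwise non-diffeomorphic manifolds.

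The final step, and the main obstacle, is to check that this construction stays within the $\bcp^2$-soluble class, i.e.\ that every $M_K\#\bcp^2$ is conventional. Since knot surgery is localized in a neighborhood of $T$, I would try to show that after a single blow-up the surgery becomes trivial, so that $M_K\#\bcp^2$ is diffeomorphic to $M_{(a,b)}\#\bcp^2$, which is conventional because $M_{(a,b)}$ was chosen $\bcp^2$-soluble. Pinning down this stable triviality of knot surgery — controlling the gluing diffeomorphism precisely enough that the knot's effect is absorbed by one connected sum with $\bcp^2$ — is the delicate technical heart of the argument and the point on which the theorem ultimately depends.
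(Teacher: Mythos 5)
The statement you set out to prove is not actually proved in the paper at all: it is quoted, with citation, as Theorem~4 of Braungardt--Kotschick \cite{bra-kot}, and the paper merely recalls the mechanism of that proof --- the infinitely many smooth structures arise from logarithmic transforms of varying multiplicity along a symplectic $2$-torus of self-intersection zero, and they are distinguished not by comparing Seiberg--Witten invariants directly but by the bandwidth invariant of Ishida--LeBrun \cite{i-lb}, which goes to infinity along the family while the number of basic classes of any fixed manifold stays finite. Your proposal is therefore an attempted reproof by a genuinely different route (Fintushel--Stern knot surgery, with diffeotypes distinguished by Alexander polynomials), and it founders precisely on the step that the choice of logarithmic transforms is designed to make tractable.

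The concrete gap is your final step: you need every knot-surgered manifold $M_K$ to remain $\bcp^2$-soluble, i.e.\ $M_K\#\bcp^2\cong M_{(a,b)}\#\bcp^2$, and you explicitly leave this open as ``the delicate technical heart.'' This is not a deferrable technicality; no such result exists to appeal to. The known stable-triviality results for knot surgery (Akbulut, Auckly, and later Baykur--Sunukjian) concern a single stabilization by $S^2\times S^2$, carry hypotheses on the knot or the ambient manifold, and at best would yield $(S^2\times S^2)$-solubility --- which does not imply the $\bcp^2$-solubility asserted in the statement (Wall's trick \cite{wall} converts $\bcp^2\#\cpb$-sums into $S^2\times S^2$-sums for non-spin manifolds, not the other way around). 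By contrast, logarithmic transforms produce manifolds to which Mandelbaum--Moishezon-type almost-complete-decomposability results \cite{man} apply, which is exactly why \cite{bra-kot} can close this step. A second, related gap sits in your realization step: the assertion that a symplectic fiber sum of $\bcp^2$-soluble pieces ``can again be shown $\bcp^2$-soluble'' is not a formal consequence of solubility of the pieces; decomposability is known to survive fiber sum only in special geometric situations, and establishing it for the models filling the geography region is real content of the cited proof, not a routine verification. As it stands, your argument proves neither the solubility of the infinite family nor that of the initial models.
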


As the manifolds considered are simply connected and symplectic then $a=b_2^+$ must be odd, namely 
condition (\ref{odd}), 
and  furthermore condition (\ref{B_K-ineq}) implies that $a$ is a  large integer. 
For minimal symplectic manifolds with $b^+>1,$ Taubes \cite{tau} showed that 
 $c^2_1=4+5b^+_2-b^-_2\geq0$, which is equivalent to
 inequality (\ref{minimal}).
In the original paper, the geography statement is given in terms of the Chern number $c_1^2,$ 
and the Todd number $\chi_h.$ 
These can be computed as $c_1^2=2\chi+3\tau=2+5b_2^+-b_2^-$ and 
$\chi_h=\frac14(\chi+\tau)=\frac12(1+b^+_2).$ 
After relabeling the constants $\epsilon$ and $N_\epsilon,$ 
the Kotschick-Braungardt inequality in \cite{bra-kot} can be formulated as inequality (\ref{B_K-ineq}).
The infinitely many smooth structures in the theorem are constructed by doing logarithmic transforms of 
different multiplicities along a symplectic $2$-torus of self-intersection zero.
To prove that this construction generates infinitely many smooth structures, 
one  considers a smooth invariant called the bandwidth 
$\mathcal{BW}$. 
This is defined to be the highest divisibility of the difference between two distinct Seiberg-Witten basic classes, 
see Ishida-LeBrun \cite[Definition 2]{i-lb}.
As the number of basic classes
of a manifold is finite while 
 $\displaystyle \sup_{i\in\NN} {\mathcal BW}(M_{(a,b,i)})=+\infty,$ we conclude that  infinitely
many diffeotypes are represented.

The blow-up of the manifold $M_{(a,b,i)}$ at $p$ points 
 is a symplectic manifold  diffeomorphic to $M_{(a,b,i)}\#p\cpb.$ Moreover, it is 
  simply connected and $\bcp^2$-soluble.
A trick of Wall \cite{wall} tells us that if $X$ is a non-spin manifold then $X\#(S^2\times S^2)$
 is diffeomorphic to $X\#(\bcp^2\#\cpb).$ 
 Hence, $M_{(a,b,i)}\#p\cpb, p\geq0,$ is also $(S^2\times S^2)$-soluble.

We denote by $ E_j$ the exceptional $2$-spheres of self-intersection $(-1)$ introduced by the blow-ups, 
and by $e_j=c_1(E_j)$ the Poincare dual of the homology class of $E_j.$
If $s$ is a basic class of $M_{(a,b,i)},$ then 
$\pm s+\sum_{j=1}^p \pm e_j$ are basic classes of $M_{(a,b,i)}\#p\cpb.$
Hence ${\mathcal BW}(M_{(a,b,i)}\#p\cpb)\geq {\mathcal BW}(M_{(a,b,i)}),$
and the bandwidth argument implies that the 
family $M_{(a,b,i)}\#p\cpb$ contains infinitely many diffeotypes.
In particular, we  showed that for each integer pair $(a,b)$ satisfying condition
(\ref{odd}) and condition (\ref{B_K-ineq}) there are infinitely many
pairwise non-diffeomorphic simply connected $S^2\times S^2$-soluble
$4$-manifolds with $(b_2^+,b_2^-)=(a,b).$

In order to generalize condition (\ref{odd}) in Theorem \ref{th-BK} we use the stable cohomotopy 
Seiberg-Witten invariant, due to Bauer-Furuta \cite{baufu,bauer2}.
In \cite[Proposition 4.5]{bauer2}, Bauer  considers manifolds obtained by
taking the connected sum of $k=2, 3$ or $4$ manifolds $\displaystyle X=X_1\#\cdots\# X_k$
 satisfying $b_1(X_j)=0, b_2^+(X_j)\equiv 3\mod4,$ and if $k=4 \sum_1^4b_2^+(X_j)\equiv4\mod8.$
 If the Seiberg-Witten invariant of a basic class $s_j$ of $X_j$ is odd, 
then $X$ has a non-vanishing  Bauer-Furuta invariant for  basic classes
 of the form 
$$\pm s_1 \pm \dots \pm s_k, {\text{ for }} k=2,3 {\text{ or }} 4.$$

Let $X_{1,i}=M_{(a,b,i)}\#p\cpb$ and $X_2=X_3=X_4=K3,$ the simply connected complex surface with trivial canonical line bundle. 
The $K3$ surface can be realized, for example, as a hypersurface of degree $4$ 
in $\bcp^3,$ and has $b_2^+=3$ and $b_2^-=19.$
Consider the following three families: 
$$X_{1,i},~X_{2,i}=X_{1,i}\#X_2,$$
$$X_{3,i}=X_{1,i}\#X_2\#X_3\#X_4,   {\text{ only when }} b_2^+(X_{1,i})+3\cdot3\equiv4\mod8.$$
As  $X_{1,i}, X_2, X_3, X_4$ are symplectic manifolds with $b_2^+>1,$  Taubes \cite{taubes} showed that
the Seiberg-Witten invariant associated
to a canonical almost complex structure is $\pm1$.
Hence we can apply Bauer's theorem and the bandwidth argument to conclude that infinitely many
smooth structures are represented. These three families cover all the points in the region defined by
inequality (\ref{SW-nontr}), up to a change of the constant $N_\epsilon.$

The $K3$ manifold is $\bcp^2$-soluble \cite{man}. 
 As $X_{1,i}$ is $(S^2\times S^2)$-soluble, for $k=2,$
 we can now conclude  that:
 \begin{align}
 X_{2,i}\#(S^2\times S^2)&\cong X_{1,i}\#(S^2\times S^2)\#K3\cong (a+1)\bcp^2\#(b+p+1)\cpb\#K3\notag\\
 &\cong (a+1+3)\bcp^2\#(b+p+1+19)\cpb.\notag
 \end{align}
 
A similar computation for $k=4$ shows that $X_{3,i}$ is $S^2\times S^2$-soluble.
This  concludes the proof of Theorem \ref{geogr}.
\end{proof}

\section{Infinitely many free actions}\label{actions}

In this section we use the examples in Theorem \ref{geogr} to construct infinitely many actions of
finite groups on manifolds with the conventional smooth structure and prove Theorem \ref{main}.
\begin{proof}[Proof of Theorem \ref{main}]
We only need to prove that the statement is true for the points in the first subset, 
as considering  the opposite orientation will imply the result for the second subset.

For any finite group $\Gamma$ acting freely on $S^3$ consider  the orientable rational homology sphere 
$S_\Gamma$ with fundamental group $\pi _1(S_\Gamma)=\Gamma$ and universal cover
 $\#(d-1) (S^2\times S^2),$ constructed by Ue \cite[Proposition 1-3]{ue}. 
Corollary 8 in \cite{i-lb-o} shows that the Bauer-Furuta invariant of $X_{k,i}\#S_\Gamma$ is non-trivial for
 monopole classes of the form $s_i\in H^2(X_{k,i},\ZZ)\hookrightarrow H^2(X_{k,i}\#S_\Gamma,\ZZ)/{torsion},$ where $s_i$ is a basic class for
 the Bauer-Furuta invariant on $X_{k,i},$ for $k=1, 2$ or $3.$
The bandwidth argument can be used again  to argue that we constructed infinitely many diffeotypes.

Moreover, the universal cover of $X_{k,i}\#S_\Gamma$ is diffeomorphic to
$$\#d X_{k,i}\#(d-1)(S^2\times S^2)\cong [d\cdot b_2^+(X_{k,i})+d-1]\bcp^2\#[d\cdot b_2^-(X_{k,i})+d-1]\cpb.$$
Hence, on the conventional smooth structure of 
$\#a\bcp^2\#b\cpb=[d\cdot b_2^+(X_{k,i})+d-1]\bcp^2\#[d\cdot b_2^-(X_{k,i})+d-1]\cpb$ 
we constructed infinitely many free actions of the group $\Gamma$ such that the quotient spaces
are homeomorphic but pairwise non-diffeomorphic. 

It is easy to check that these constructions cover all the points in the  region 
$R_\epsilon$ satisfying conditions $D_1$ and $D_2$ for 
\begin{equation}\label{constant}
N_\epsilon'=dN_\epsilon+(\frac12-\epsilon)(d-1),
\end{equation}
as we consider all the manifolds $X_{k,i}$ used  in the proof of Theorem \ref{geogr}.
\end{proof}

\section{The sign of the Yamabe invariant}\label{prop-yam}
We consider next
an invariant associated to Riemannian metrics.
 On a closed $4$-manifold $M$, given a Riemannian metric $g, $ one defines the Yamabe invariant of the conformal class $[g]$ of $g$  as:
$$Y(M,[g])=\inf_{\wt g\in[g]}\frac{\int_M s_{\wt g} d\mu_{\wt g}}{{Vol(\wt g)}^{\frac12}},$$
where $[g]=\{\wt g=e^fg|~ f:M\to\RR \text{ smooth}\},$ see for example \cite{schoen, lepa}.

The Yamabe invariant is a diffeomorphism invariant of the manifold and is defined  \cite{oKob,schoen-y} as:
\begin{equation}
\label{yam}
Y(M)=\sup_{[g]} Y(M,[g]).
\end{equation}

\begin{proof}[Proof of Proposition \ref{yamabe}]
Given one of the $\Gamma_i$ actions on $M$ constructed in Theorem \ref{main}  the
quotient $M/\Gamma_i$ is diffeomorphic to $X_{k,i}\#S_\Gamma,$ with the previous notations.
As the manifold $X_{k,i}\#S_\Gamma$ has a non-trivial Bauer-Furuta invariant, 
then $Y(X_{k,i}\#S_\Gamma)\leq0$ \cite{i-lb-o}.
Moreover, by  \cite[Proposition 12 and 14]{i-lb-o}  the Yamabe invariant satisfies:
$$Y(X_{k,i}\#S_\Gamma)\leq-4\pi\sqrt{2c_1^2(M_{(a,b,i)})+2\sum_{j=2}^kc_1^2(X_j)}
=-4\pi\sqrt{2c_1^2(M_{(a,b,i)})},$$
where $M_{(a,b,i)}$ denotes the minimal symplectic manifold used in the proof of 
Theorem \ref{geogr}. Moreover, we can assume that $M_{(a,b,i)}$ is a symplectic manifold of general type, 
meaning that $c_1^2(M_{(a,b,i)})>0.$ 
This implies that $Y(X_{k,i}\#S_\Gamma)\leq-4\pi\sqrt2$ and, in particular, that there is a
metric of negative constant scalar curvature in the conformal class of $\pi_*(g),$ 
where $\pi_*$ denotes the push
forward of the $\Gamma_i$-invariant metric $g.$ 
Then the conformal class $[g]$ contains a negative constant
scalar curvature metric (which is $\Gamma_i$-invariant) and hence, its Yamabe invariant is negative
\cite{schoen}. This concludes the proof of the proposition.
\end{proof}

In order to prove  Proposition \ref{yam0} we need a different construction of group actions.
\begin{proof}[Proof of Proposition \ref{yam0}]
The main blocks in the construction are  the elliptic surfaces. 
As they are well understood from the algebraic geometry, 
differential topology, or Seiberg-Witten theory point of view, we give the reader a textbook  reference \cite[Chapter 3]{go-st},
where all these aspects are presented and complete references are included. 
We denote by $E(n)$ the simply connected elliptic surface with Euler characteristic $\chi(E(n))=12n$ and
signature $\tau(E(n))=-8n$ which admits a section. Let $E(n)_{p,q}$ the complex surface obtained by doing 
two logarithmic transforms of multiplicities $p$ and $q$ on two generic fibers and require that  $(p,q)=1,$ in
order for the manifold to be simply connected. 
It is well known that $E(n)_{p,q}$ is diffeomorphic to $E(n)_{p',q'}$ if and only if $\{p,q\}=\{p',q'\}$ for $n\geq2,$
\cite[Corollary 3.3.7]{go-st}
and when $n=1$ if $\{p,q\}=\{p',q'\}$ or if $1\in\{p,q\}\cap\{p',q'\}$ \cite[Theorem 3.3.8]{go-st}.
Moreover, $E(n)_{p,q}$ is spin if and only if $n$ is even and $pq$ is odd \cite[Lemma 3.3.4]{go-st}.
In particular, if $n$ is odd or if $n$ is even and $pq$ is even then the manifolds $E(n)_{p,q}$ are all non-spin
and homeomorphic, for a fixed $n.$ Moreover, in this case $E(n)_{p,q}$ is 
$\bcp^2$-soluble \cite[Theorem 2.15]{man}.
Using Wall's trick \cite{wall}, then this is also $S^2\times S^2$-soluble.

As before, $\Gamma$ is a finite group of order $d$ acting freely on $S^3$ 
and $S_\Gamma$ is Ue's  rational homology sphere with fundamental group $\Gamma.$
We consider the following three families of manifolds:

\begin{align*}
\mathcal F_1(n)=&\{E(n)_{p,q}\#k\cpb\#S_\Gamma| k\in\NN, (p,q)=1, pq {\text{ even and if }} n=1 {\text{ then }}p,q\geq2\}\\
\mathcal F_2(n)=&\{E(n)_{p,q}\#k\cpb\#S_\Gamma\#E(2)| k\in\NN, (p,q)=1, pq {\text{ even and }} n  {\text{ even}}\}\\
\mathcal F_3(n)=&\{E(n)_{p,q}\#k\cpb\#S_\Gamma\#3E(2)| k\in\NN, (p,q)=1,  pq{\text{ even and }} n \equiv 2\mod 4\}
\end{align*}

For fixed $n$ and $k$ the manifolds in each family are homeomorphic, moreover by using the bandwidth arguments due to \cite{i-lb} we  conclude that infinitely many diffeotypes are constructed. 
Their universal cover is  the conventional non-spin manifold.
The topological invariants of the first family are of the form
$$(a,b)=(b_2^+,b_2^-)=(2n-1,10n-1+k), n\geq1, k\geq0,$$
covering the region $$b\geq 5a+4 {\text{ for $a$ odd}}.$$
Hence, their universal coverings cover all the integer lattice points in the region:
$$(\frac{1+b}d-1)\geq 5(\frac{1+a}d-1)+4 {\text{ or }}b\geq 5a+4 {\text{ for $\frac{1+a}d$ even}}.$$
A similar computation for the second and third families shows that we are able to cover all the 
integer lattice points on the regions:
$$ b\geq 5a+4+4d{\text{ when }} n=\frac12(\frac{1+a}d-3) {\text{ even, or equivalently }}
 \frac{1+a}d\equiv3\mod4,$$
$$b\geq 5a+4+12d{\text{ when }} n=\frac12(\frac{1+a}d-9)\equiv 2\mod4 {\text{, or  }} \frac{1+a}d\equiv5\mod8.$$
The union of these sets covers the needed region.

The Yamabe invariant of the manifolds in the families $\mathcal F_{1,2,3}$ is zero, 
by \cite[Theorem A]{i-lb-o}. Hence, we can always choose a family of 
negative constant scalar curvature metrics 
for which the scalar curvature converges to zero. 
These lift to metrics on the universal covering such that the Yamabe invariants 
of their conformal classes are negative 
and converge to zero.
\end{proof}

\begin{proof}[Proof Corollary \ref{dist-yam}]
Propositions \ref{yamabe} and \ref{yam0} provide the constructions for the infinitely many actions for which the quotients have the Yamabe invariant negative or zero, respectively.
The quotient manifolds are all homeomorphic 
by the Donaldson and Freedman's classification \cite{don,freedman}, 
as they are all of the form $M_*\# S_\Gamma,$ with 
the manifolds $M_*$ being simply connected non-spin with the same topological invariants 
$b_2^+$ and $b_2^-.$
They are pairwise non-diffeomorphic, as the quotients in different families have different Yamabe invariants 
while in the same family they are non-diffeomorphic by construction.
\end{proof}

\begin{rmk}
If the Yamabe invariant of $S_\Gamma$ is positive, then we can also construct an action on $M$
such that $M/\Gamma$ is diffeomorphic to $\#m\bcp^2\# n\cpb\#S_\Gamma$ and 
has positive Yamabe invariant. This is true, if for example $\Gamma=\ZZ_2$ and
$S_{\ZZ_2}$  is the quotient of $S^2\times S^2$  by the diagonal antipodal map on each factor.
It is difficult to exhibit two homeomorphic, non diffeomorphic structures with positive Yamabe invariants
as  there are no known invariants to distinguish  such smooth structures. 
\end{rmk}

\begin{ack}
The author would like to thank Claude LeBrun for valuable comments which
 helped improve the presentation.
\end{ack}

\bibliographystyle{alpha} 

\begin{thebibliography}{41}

\bibitem[AIP15]{aip}
{\sc Akhmedov, A.; Ishida, M.;  Park, B.D.}
Dissolving $4$-manifolds, covering spaces and Yamabe invariant.
{\em Ann. Global Anal. Geom.} 47 (2015), no. 3, 271-283

\bibitem[BaFu04]{baufu}
{\sc Bauer, S.;  Furuta, M.},
A stable cohomotopy refinement of  {S}eiberg-{W}itten invariants: I,
{\em Invent. Math.} 155 (2004), no.1, ~1--19.


 \bibitem[Bau04]{bauer2}
 {\sc Bauer, S.},
 { A stable cohomotopy refinement of Seiberg-Witten invariants: {II}},  
{\em  Invent. Math.} 155 (2004), no.1, ~21--40.


\bibitem[BrKo05]{bra-kot}
{\sc Braungardt, V.; Kotschick, D.}
Einstein metrics and the number of smooth structures on a four-manifold,
{\em Topology} 44 (2005), no. 3,~641--659.


\bibitem[Don83]{don}
{\sc Donaldson, S. K. }
An application of gauge theory to four-dimensional topology.
{\em J. Differential Geom.} 18 (1983), no. 2, 279-315


\bibitem[Fre82]{freedman}
{\sc Freedman, M.H.}
The topology of four-dimensional manifolds,
{\em J. Differential Geom.}17  (1982), no.3, ~357--453.


\bibitem[GoSt99]{go-st}
{\sc  Gompf, R.E.; Stipsicz,A.I. }
$4$-manifolds and Kirby calculus,
{\em American Mathematical Society, Providence, RI,} 1999.


\bibitem[GrLa80]{gr-la}
{\sc Gromov,M.; Lawson, H.B. }
The classification of simply connected manifolds of positive scalar curvature. 
{\em Ann. of Math. }(2) 111 (1980), no. 3, 423-434


  \bibitem[HKW03]{hkw}
  {\sc Hanke, B.; Kotschick, D.; Wehrheim, J.}
  { Dissolving four-manifolds and positive scalar curvature}
{\em Math. Z.}  245  (2003),  no. 3, 545--555


\bibitem[IsLe02]{i-lb}
{\sc Ishida, M.; LeBrun, C.}
Spin manifolds, Einstein metrics, and differential topology, 
{\em Math. Res. Lett.}, 9 (2002), no.2-3,~229-240.


\bibitem[IsLe03]{i-lb-o}
{\sc Ishida, M.; LeBrun, C.}
Curvature, connected sums, and Seiberg-Witten theory,
{\em Comm. Anal. Geom.} 11 (2003), no. 5, ~809--836. 


\bibitem[Kob87]{oKob}
{\sc Kobayashi, O.}
{ Scalar Curvature of a Metric with Unit Volume,}
{\em  Math. Ann.} 279 (1987), 253-265.


\bibitem[LeB03]{lnyj}
{\sc LeBrun, C.}
Scalar curvature, covering spaces, and Seiberg-Witten theory,
{\em New York J. Math.} 9 (2003), 93-97


\bibitem[LePa87]{lepa}
{\sc Lee,J.M.;  Parker, T.H.}
 The Yamabe problem. 
 {\em Bull. Amer. Math. Soc. (N.S.)} 17 (1987), no. 1, 37-91


\bibitem[Man80]{man}
 {\sc Mandelbaum, R.}  
 Four-dimensional topology: an introduction, 
{\em  Bull.Amer.Math.Society,} 2 (1980),~1--159.


\bibitem[Ros86]{rosen}
{\sc Rosenberg, J.}
$C^*$-algebras, positive scalar curvature, and the Novikov conjecture. III. 
{\em Topology} 25 (1986), no. 3, 319-336


\bibitem[Sch84]{schoen}
{\sc Schoen, R.}
Conformal deformation of a Riemannian metric to constant scalar curvature. 
{\em J. Differential Geom.} 20 (1984), no. 2, 479-495


\bibitem[Sch87]{schoen-y}
{\sc Schoen, R.}
{Variational Theory for the Total Scalar Curvature Functional for Riemannian Metrics and Related Topics,}
{\em  Lec. Notes Math.} 1365 (1987), 120-154.


\bibitem[Su08]{eu-einst}
{\sc {\c S}uvaina, I.}
{  Einstein Metrics and Smooth Structures on Non-Simply Connected 4-Manifolds}, 
arXiv:0804.4823



\bibitem[Tau94]{taubes}
{\sc Taubes, C.H.}
The Seiberg-Witten invariants and symplectic forms, 
{\em Math. Res. Lett.} 1 (1994), no. 6, ~809--822.


\bibitem[Tau96]{tau}
{\sc Taubes, C.H.}
$SW\Rightarrow Gr$: from the Seiberg-Witten equations to pseudo-holomorphic curves. 
{\em J. Amer. Math. Soc.} 9 (1996), no. 3, 845-918


\bibitem[Tor15]{tor}
{\sc  Torres, R.}
On entropies, $\mathcal F$-structures, and scalar curvature of certain involutions. (English summary) 
{\em Geom. Dedicata} 178 (2015), 239-258


\bibitem[Ue96]{ue}
{\sc Ue, M.},
A remark on the exotic free actions in dimension $4$,
{\em J. Math. Soc. Japan}  48  (1996), no. 2,~ 333--350.


\bibitem[Wa64]{wall}
{\sc Wall, C. T. C.}
Diffeomorphisms of $4$-manifolds. 
{\em J. London Math. Soc.} 39 1964 131-140
%
 
\bibitem[Wit94]{witten}
{\sc Witten, E}
 Monopoles and four-manifolds,
{\em Math. Res. Lett.} 1 (1994), no. 6, ~809--822.

\end{thebibliography}

  \end{document}